\documentclass{amsart}
%%%%%%%%%%%%%%%%%%%%%%%%%%%%%%%%%%%%%%%%%%%%%%%%%%%%%%%%%%%%%%%%%%%%%%%%%%%%%%%%%%%%%%%%%%%%%%%%%%%%%%%%
%%%%%%%%%%%%%%%%%%%%%%%%%%%%%%%%%%%%%%%%%%%%%%%%%%%%%%%%%%%%%%%%%%%%%%%%%%%%%%%%%%%%%%%%%%%%%%%%%%%%%%%%
\usepackage[dvipdfmx]{graphicx}
\usepackage{amssymb,amsmath,amsthm,amscd,amsfonts}

\newtheorem{theorem}{Theorem}[section]
\newtheorem{proposition}[theorem]{Proposition}

\theoremstyle{definition}
\newtheorem{definition}[theorem]{Definition}

\newtheorem{remark}[theorem]{Remark}

\newtheorem*{RPL*}{Relation to previous literature}
\newtheorem*{OTP*}{Organization of this paper}
\newtheorem*{Ack*}{Acknowledgements}
%%%%%%%%%%%%%%%%%%%%%%%%%%%%%%%%%%%%%%%%%%%%%%%%%%%%%%%%%%%%%%%%%%%%%%%%%%%%%%%%%%%%%%%%%%%%%%%%%%%%%%%%
%%%%%%%%%%%%%%%%%%%%%%%%%%%%%%%%%%%%%%%%%%%%%%%%%%%%%%%%%%%%%%%%%%%%%%%%%%%%%%%%%%%%%%%%%%%%%%%%%%%%%%%%

\begin{document}

%%%%%%%%%%%%%%%%%%%%%%%%%%%%%%%%%%%%%%%%%%%%%%%%%%%%%%%%%%%%%%%%%%%%%%%%%%%%%%%%%%%%%%%%%%%%%%%%%%%%%%%%
%%%%%%%%%%%%%%%%%%%%%%%%%%%%%%%%%%%%%%%%%%%%%%%%%%%%%%%%%%%%%%%%%%%%%%%%%%%%%%%%%%%%%%%%%%%%%%%%%%%%%%%%

\title[]
{Lagrangian self-similar solutions in gradient shrinking K\"ahler-Ricci solitons}

%%%%%%%%%%%%%%%%%%%%%%%%%%%%%%%%%%%%%%%%%%%%%%%%%%%%%%%%%%%%%%%%%%%%%%%%%%%%%%%%%%%%%%%%%%%%%%%%%%%%%%%%
%%%%%%%%%%%%%%%%%%%%%%%%%%%%%%%%%%%%%%%%%%%%%%%%%%%%%%%%%%%%%%%%%%%%%%%%%%%%%%%%%%%%%%%%%%%%%%%%%%%%%%%%

\author{Hikaru Yamamoto}
\address{Department of Mathematics, Faculty of Science, Tokyo University of Science, 1-3 Kagurazaka, Shinjuku-ku, Tokyo 162-8601, Japan}
\email{hyamamoto@rs.tus.ac.jp}

%%%%%%%%%%%%%%%%%%%%%%%%%%%%%%%%%%%%%%%%%%%%%%%%%%%%%%%%%%%%%%%%%%%%%%%%%%%%%%%%%%%%%%%%%%%%%%%%%%%%%%%%
%%%%%%%%%%%%%%%%%%%%%%%%%%%%%%%%%%%%%%%%%%%%%%%%%%%%%%%%%%%%%%%%%%%%%%%%%%%%%%%%%%%%%%%%%%%%%%%%%%%%%%%%

\begin{abstract} 
In this paper, we give a lower bound estimate for the diameter of a Lagrangian self-shrinker in a gradient shrinking K\"ahler-Ricci soliton 
as an analog of a result of A. Futaki, H. Li and X.-D. Li \cite{FutakiLiLi} for a self-shrinker in a Euclidean space. 
We also prove an analog of a result of H.-D. Cao and H. Li \cite{CaoLi} about the non-existence of compact self-expanders in a Euclidean space. 
\end{abstract} 

%%%%%%%%%%%%%%%%%%%%%%%%%%%%%%%%%%%%%%%%%%%%%%%%%%%%%%%%%%%%%%%%%%%%%%%%%%%%%%%%%%%%%%%%%%%%%%%%%%%%%%%%
%%%%%%%%%%%%%%%%%%%%%%%%%%%%%%%%%%%%%%%%%%%%%%%%%%%%%%%%%%%%%%%%%%%%%%%%%%%%%%%%%%%%%%%%%%%%%%%%%%%%%%%%

\keywords{mean curvature flow, Ricci flow, self-similar solution, gradient soliton}

%%%%%%%%%%%%%%%%%%%%%%%%%%%%%%%%%%%%%%%%%%%%%%%%%%%%%%%%%%%%%%%%%%%%%%%%%%%%%%%%%%%%%%%%%%%%%%%%%%%%%%%%
%%%%%%%%%%%%%%%%%%%%%%%%%%%%%%%%%%%%%%%%%%%%%%%%%%%%%%%%%%%%%%%%%%%%%%%%%%%%%%%%%%%%%%%%%%%%%%%%%%%%%%%%

\subjclass[2010]{53C42, 53C44}
%53C42 Immersions (minimal, prescribed curvature, tight, etc.)
%53C44 Geometric evolution equations (mean curvature flow)

%%%%%%%%%%%%%%%%%%%%%%%%%%%%%%%%%%%%%%%%%%%%%%%%%%%%%%%%%%%%%%%%%%%%%%%%%%%%%%%%%%%%%%%%%%%%%%%%%%%%%%%%
%%%%%%%%%%%%%%%%%%%%%%%%%%%%%%%%%%%%%%%%%%%%%%%%%%%%%%%%%%%%%%%%%%%%%%%%%%%%%%%%%%%%%%%%%%%%%%%%%%%%%%%%

\thanks{This work was supported by Grant-in-Aid for JSPS Fellows Grant Number 13J06407 and the Program for Leading Graduate Schools, MEXT, Japan. }

%%%%%%%%%%%%%%%%%%%%%%%%%%%%%%%%%%%%%%%%%%%%%%%%%%%%%%%%%%%%%%%%%%%%%%%%%%%%%%%%%%%%%%%%%%%%%%%%%%%%%%%%
%%%%%%%%%%%%%%%%%%%%%%%%%%%%%%%%%%%%%%%%%%%%%%%%%%%%%%%%%%%%%%%%%%%%%%%%%%%%%%%%%%%%%%%%%%%%%%%%%%%%%%%%

\maketitle

%%%%%%%%%%%%%%%%%%%%%%%%%%%%%%%%%%%%%%%%%%%%%%%%%%%%%%%%%%%%%%%%%%%%%%%%%%%%%%%%%%%%%%%%%%%%%%%%%%%%%%%%
%%%%%%%%%%%%%%%%%%%%%%%%%%%%%%%%%%%%%%%%%%%%%%%%%%%%%%%%%%%%%%%%%%%%%%%%%%%%%%%%%%%%%%%%%%%%%%%%%%%%%%%%
\section{Introduction}
 
A gradient shrinking K\"ahler-Ricci soliton is a K\"ahler manifold $(N,\omega,g,J)$ with a smooth function $f:N\to\mathbb{R}$ satisfying 
\begin{align}\label{grasol}
\mathrm{Ric}(g)+\mathrm{Hess}_{g}f=g. 
\end{align}
By the equation (\ref{grasol}), it follows that the $(2,0)$-part of $\mathop{\mathrm{Hess}}f$ is zero. 
Hence it is clear that the $(1,0)$-part of $\nabla f$ is a holomorphic vector field on $N$. 
By a simple calculation, it is proved that the gradient of $R(g)+|\nabla f|^2-2 f$ is zero, 
and we put a constant $C_{0}$ by 
\begin{align}\label{constC0}
C_{0}:=R(g)+|\nabla f|^2-2 f, 
\end{align}
where $R(g)$ is the scalar curvature of $(N,g)$. 
It is proved that $R(g)\geq 0$ for a complete gradient shrinking Ricci soliton by an application of Corollary 2.5 in \cite{Chen}. 

For an immersion $F\colon L\to N$, we get a section $(\nabla f)\circ F \in \Gamma(L,F^{*}(TN))$, 
and we usually omit the symbol $\circ F$, for short. 
\begin{definition}\label{defofselsimsol}
An immersion map $F\colon L\to N$ is called a self-similar solution if it satisfies 
\begin{align}\label{selsimsol}
H=\lambda{\nabla f}^{\bot}
\end{align}
for some constant $\lambda \in \mathbb{R}$, 
where $H$ is the mean curvature vector field of $F$ and $\bot$ denotes the projection onto the normal bundle of $L$. 
It is called a self-shrinker, a steady soliton or a self-expander when $\lambda<0$, $\lambda=0$ or $\lambda>0$, respectively. 
\end{definition}

For example, a function $f(z^1,\dots,z^m):=\frac{1}{2}(|z^1|^2+\dots+|z^m|^2)$ on $\mathbb{C}^m$ with the standard K\"ahler structure satisfies the identity (\ref{grasol}), 
and it satisfies $\nabla f(x)=x$ under a natural identification of points and tangent vectors for all points $x\in \mathbb{C}^{m}\cong\mathbb{R}^{2m}$. 
Hence the equation (\ref{selsimsol}) coincides with $H_{x}=\lambda x^{\bot}$ for all points $x\in F(L)\subset \mathbb{C}^m\cong \mathbb{R}^{2m}$, 
and Definition~\ref{defofselsimsol} can be considered as a generalization of a self-similar solution in a Euclidean space to in a gradient shrinking Ricci soliton. 

There are many results about self-similar solutions in a Euclidean space. 
By a generalization of the notion of a self-similar solution in a Euclidean space to in a gradient shrinking Ricci soliton as in Definition \ref{defofselsimsol}, 
we can discuss which results about self-similar solutions in a Euclidean space also hold in a gradient shrinking Ricci soliton. 
As an example of such results, it is proved that a part of a result due to Smoczyk also holds in a gradient shrinking K\"ahler-Ricci soliton. 
More precisely, in the proof of Theorem 2.3.5 in \cite{Smoczyk2}, Smoczyk proved that 
every compact Lagrangian self-similar solution with exact mean curvature form is a minimal submanifold in $\mathbb{C}^n$, 
and as a generalization of this statement, 
it is proved  in \cite{Yamamoto2} that every compact Lagrangian self-similar solution with exact mean curvature form is a minimal submanifold in a gradient shrinking K\"ahler-Ricci soliton. 

In this paper, we give further two results which are already established when $(N,g)$ is a Euclidean space. 
The first result is an analog of Theorem 4.3 of A. Futaki, H. Li and X.-D. Li \cite{FutakiLiLi} under the Lagrangian assumption. 
This gives a lower bound of the diameter of a Lagrangian self-shrinker in a gradient shrinking K\"ahler-Ricci soliton. 

\begin{theorem}\label{main1}
Let $(N,\omega,g,J)$ be a $2m$-dimensional gradient shrinking K\"ahler-Ricci soliton with potential function $f\colon N\to\mathbb{R}$ satisfying the equation (\ref{grasol}). 
Let $F\colon L\to N$ be a compact Lagrangian self-shrinker with 
\[H=-\frac{1}{2}{\nabla f}^{\bot}. \]
Assume that $F(L)$ is not contained in $\{\, f=m-\frac{C_{0}}{2} \,\}$, where $C_{0}$ is a constant defined by (\ref{constC0}). 
Then we have 
\[\mathrm{diam}(L,F^{*}g)\geq \frac{\pi}{\sqrt{\frac{3}{4}+\frac{m}{2}(K_{0}+A_{0}^2)}}, \]
for constants $K_{0},A_{0}\geq 0$ satisfying $|K_{N}|\leq K_{0}$ and $|A|\leq A_{0}$, 
where $K_{N}$ is the sectional curvature of $(N,g)$ and $A$ is the second fundamental form of $F$. 
\end{theorem}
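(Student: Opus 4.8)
The plan is to reduce the estimate to a single elliptic identity for the restriction of $f$ to $L$, and then to run the eigenvalue--diameter argument of Futaki--Li--Li \cite{FutakiLiLi} with that function in place of the soliton potential.

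\textbf{Step 1: an identity on $L$.} Write $\phi:=f\circ F\colon L\to\mathbb{R}$ and set $\psi:=\phi-\bigl(m-\tfrac{C_0}{2}\bigr)$. Denote by $\Delta$ and $\nabla$ the Laplacian and gradient of $(L,F^{*}g)$. For an orthonormal frame $e_1,\dots,e_m$ of $TL$ there is the standard formula $\Delta\phi=\sum_i\mathrm{Hess}_{g}f(e_i,e_i)+\langle H,\nabla f\rangle$. By the soliton equation (\ref{grasol}), $\mathrm{Hess}_{g}f=g-\mathrm{Ric}(g)$, so the first term equals $m-\sum_i\mathrm{Ric}(g)(e_i,e_i)$; and since $L$ is Lagrangian, $\{Je_1,\dots,Je_m\}$ is an orthonormal frame of the normal bundle, so $J$--invariance of $\mathrm{Ric}(g)$ forces $\sum_i\mathrm{Ric}(g)(e_i,e_i)=\tfrac12R(g)$. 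For the second term, the self-shrinker equation gives $\langle H,\nabla f\rangle=-\tfrac12|{\nabla f}^{\bot}|^{2}=-\tfrac12\bigl(|\nabla f|^{2}-|\nabla\psi|^{2}\bigr)$, and (\ref{constC0}) gives $|\nabla f|^{2}=2\phi+C_0-R(g)$ along $L$. Substituting, the $R(g)$--contributions cancel and one obtains
\[
\Delta\psi+\psi=\tfrac12|\nabla\psi|^{2}\qquad\text{on }L .
\]
This is the analogue here of the computations behind the results of Smoczyk and \cite{Yamamoto2}. Any constant solving this identity must be $0$, so the hypothesis $F(L)\not\subset\{\,f=m-\tfrac{C_0}{2}\,\}$ is exactly the statement that $\psi$ is non-constant.

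\textbf{Step 2: first consequences.} Integrating the identity over $(L,F^{*}g)$ gives $\int_L\psi\,dV=\tfrac12\int_L|\nabla\psi|^{2}\,dV\ge0$. Since $R(g)\ge0$, we also have $|\nabla\psi|^{2}=|{\nabla f}^{\top}|^{2}\le|\nabla f|^{2}=2\psi+2m-R(g)\le2(\psi+m)$, so $\psi\ge-m$ on $L$ and, by the identity, $-\psi\le\Delta\psi\le m$. Subtracting $\langle\nabla\phi,\nabla\psi\rangle=|\nabla\psi|^{2}$ from both sides of the identity also puts it in the weighted form $\Delta_{\phi}\psi+\psi=-\tfrac12|\nabla\psi|^{2}$, where $\Delta_{\phi}u:=\Delta u-\langle\nabla\phi,\nabla u\rangle$ is the drift Laplacian of $(L,F^{*}g,e^{-\phi}\,dV)$.

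\textbf{Step 3: eigenvalue and diameter.} Let $\lambda_1$ be the first nonzero eigenvalue of $\Delta_{\phi}$. On the one hand, using $\psi$ — suitably translated to have vanishing weighted mean — as a test function in the Rayleigh quotient, and feeding in the identity of Step 1, the Bochner formula for $|\nabla\psi|^{2}$, and the gradient bound $|\nabla\psi|^{2}\le2(\psi+m)$, I expect $\lambda_1\le\tfrac34+\tfrac m2(K_0+A_0^{2})$; the quantities $K_0,A_0$ enter because in the Bochner term the intrinsic Ricci curvature of $L$ must be converted, via the Gauss equation, into ambient sectional curvature (bounded by $K_0$) plus quadratic expressions in the second fundamental form (bounded by $A_0^{2}$), and because $|H|$ itself is controlled through $H=-\tfrac12{\nabla f}^{\bot}$. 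On the other hand, bounding the Bakry--\'Emery Ricci tensor $\mathrm{Ric}(F^{*}g)+\mathrm{Hess}_{F^{*}g}\phi$ from below — its leading term is $g|_{TL}$, the corrections being again controlled by $K_0$ and $A_0$ — a Li--Yau/Zhong--Yang-type lower bound for the first eigenvalue of a weighted Laplacian gives $\lambda_1\ge\pi^{2}/\mathrm{diam}(L,F^{*}g)^{2}$. Combining the two inequalities yields $\pi^{2}/\mathrm{diam}(L,F^{*}g)^{2}\le\tfrac34+\tfrac m2(K_0+A_0^{2})$, which is the assertion.

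\textbf{The main obstacle.} The delicate part is Step 3, specifically the upper bound for $\lambda_1$: the function $\psi$ is only an \emph{approximate} eigenfunction, the defect being the term $\tfrac12|\nabla\psi|^{2}$, so the Rayleigh-quotient estimate has to be coupled with a maximum-principle/Bochner argument controlling $\max_L|\nabla\psi|^{2}$ (hence the defect) by $\max_L|\psi|$, $m$, $K_0$ and $A_0$, and the accounting of the Gauss-equation terms must be sharp enough to produce exactly the stated constant rather than a cruder one. A secondary point is to verify that the lower bound obtained for $\mathrm{Ric}(F^{*}g)+\mathrm{Hess}_{F^{*}g}\phi$ is in the form required by the eigenvalue comparison used above, since its sign depends on the sizes of $K_0$ and $A_0$.
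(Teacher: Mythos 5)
Your Step 1 is correct, and in fact it already contains the whole mechanism of the proof; the genuine gap is Step 3, which is not an argument but a conjecture, and whose two halves as stated do not hold together. First, the upper bound $\lambda_{1}\leq\tfrac34+\tfrac m2(K_{0}+A_{0}^{2})$ is only announced (``I expect''); with your choice of drift weight $e^{-\phi}=e^{-f\circ F}$ the function $\psi$ satisfies $\Delta_{\phi}\psi+\psi=-\tfrac12|\nabla\psi|^{2}$, and controlling this defect by a Bochner/maximum-principle argument sharply enough to recover exactly the stated constant is precisely what you have not done. Second, the lower bound $\lambda_{1}\geq\pi^{2}/\mathrm{diam}^{2}$ is a Zhong--Yang type estimate that requires \emph{nonnegative} Bakry--\'Emery Ricci curvature; here the available lower bound is $\kappa=\tfrac12-m(K_{0}+A_{0}^{2})$, which may well be negative, so that inequality cannot be invoked as stated (your own ``secondary point'' flags this, but flagging it does not repair the step).

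The missing idea is simply to take the drift weight to be $\tfrac12 f\circ F$ rather than $f\circ F$, matching the shrinker normalization $H=-\tfrac12{\nabla f}^{\bot}$. Your identity $\Delta\psi+\psi=\tfrac12|\nabla\psi|^{2}$ can be rewritten as
\begin{equation*}
\Delta\psi-\bigl\langle\nabla\bigl(\tfrac12 f\circ F\bigr),\nabla\psi\bigr\rangle=-\psi,
\end{equation*}
because $\langle\nabla(\tfrac12 f\circ F),\nabla\psi\rangle=\tfrac12|\nabla\psi|^{2}$; so $\psi$ (nonzero by the hypothesis $F(L)\not\subset\{f=m-\tfrac{C_{0}}{2}\}$) is an \emph{exact} eigenfunction of the half-weighted Laplacian with eigenvalue $1$ --- no defect term, no approximate-eigenfunction analysis. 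This is the paper's route: one then shows, via the Gauss equation, $\mathrm{Hess}_{g}f=g-\mathrm{Ric}(g)$, the Lagrangian $J$-rotation trick, and the cancellation of the $g(A(X,X),H)$ terms, that $\mathrm{Ric}(F^{*}g)+\mathrm{Hess}_{F^{*}g}(\tfrac12 f\circ F)\geq\kappa\,F^{*}g$ with $\kappa=\tfrac12-m(K_{0}+A_{0}^{2})$, and applies Theorem 1.1 of \cite{FutakiLiLi}, which is designed exactly for a possibly negative $\kappa$: $1\geq\lambda_{1}\geq\sup_{s\in(0,1)}\{4s(1-s)\pi^{2}/d^{2}+s\kappa\}$. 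Taking $s=\tfrac12$ gives $\pi^{2}/d^{2}\leq1-\tfrac\kappa2=\tfrac34+\tfrac m2(K_{0}+A_{0}^{2})$, i.e.\ the stated diameter bound. So the curvature constants enter through the eigenvalue--diameter comparison, not through an upper bound on $\lambda_{1}$; as written, your Step 3 would need substantial and uncertain new work, whereas the half-weight observation closes the proof immediately from what you already established.
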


The second result is an analog of Proposition 5.3 of H.-D. Cao and H. Li \cite{CaoLi} under the Lagrangian assumption. 
This theorem states the non-existence of compact Lagrangian self-expanders in a certain gradient shrinking K\"ahler-Ricci soliton. 

\begin{theorem}\label{main2}
Let $(N,\omega,g,J)$ be a $2m$-dimensional gradient shrinking K\"ahler-Ricci soliton with potential function $f\colon N\to\mathbb{R}$ satisfying the equation (\ref{grasol}) and assume that $R(g)<2m$, 
then there exists no compact Lagrangian self-expander in $N$. 
\end{theorem}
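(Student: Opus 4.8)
The plan is to adapt the maximum-principle argument of Cao--Li to the soliton setting, with the potential function $f$ playing the role that $\tfrac12|x|^{2}$ plays in $\mathbb{C}^{m}$. Suppose, for contradiction, that $F\colon L\to N$ is a compact Lagrangian self-expander, so $H=\lambda\,{\nabla f}^{\bot}$ for some $\lambda>0$. I would study the function $f\circ F$ on $L$ and compute its Laplacian. For a smooth function $\phi$ on $N$, tracing the identity $\mathrm{Hess}_{L}(\phi\circ F)(X,Y)=\mathrm{Hess}_{g}\phi(X,Y)+\langle\nabla\phi,A(X,Y)\rangle$ over an orthonormal frame of $TL$ gives $\Delta_{L}(\phi\circ F)=\mathrm{tr}_{L}(\mathrm{Hess}_{g}\phi)+\langle\nabla\phi,H\rangle$, where $\mathrm{tr}_{L}$ denotes the trace of the restriction to $TL$ and $H=\mathrm{tr}_{L}A$.

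The crucial step is to evaluate $\mathrm{tr}_{L}(\mathrm{Hess}_{g}f)$ for $\phi=f$, and this is where the Lagrangian hypothesis enters. Pick an orthonormal frame $e_{1},\dots,e_{m}$ of $T_{p}L$; since $L$ is Lagrangian, $Je_{1},\dots,Je_{m}$ is an orthonormal frame of the normal space, so $e_{1},\dots,e_{m},Je_{1},\dots,Je_{m}$ is an orthonormal frame of $T_{p}N$. Combining the soliton equation (\ref{grasol}), which gives $\mathrm{Hess}_{g}f=g-\mathrm{Ric}(g)$, with the K\"ahler identity $\mathrm{Ric}(JX,JY)=\mathrm{Ric}(X,Y)$, one finds
\[
\mathrm{tr}_{L}(\mathrm{Hess}_{g}f)=m-\sum_{i=1}^{m}\mathrm{Ric}(e_{i},e_{i})=m-\tfrac12\sum_{i=1}^{m}\bigl(\mathrm{Ric}(e_{i},e_{i})+\mathrm{Ric}(Je_{i},Je_{i})\bigr)=m-\tfrac12 R(g).
\]
Since $H$ is normal, $\langle\nabla f,H\rangle=\lambda\,|{\nabla f}^{\bot}|^{2}$, so I obtain the pointwise identity
\[
\Delta_{L}(f\circ F)=m-\tfrac12 R(g)+\lambda\,|{\nabla f}^{\bot}|^{2}\quad\text{on }L.
\]

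To conclude, the hypothesis $R(g)<2m$ makes $m-\tfrac12 R(g)>0$ on all of $N$, and $\lambda>0$ makes the last term nonnegative; hence $\Delta_{L}(f\circ F)>0$ everywhere on $L$. But $L$ is closed, so $\int_{L}\Delta_{L}(f\circ F)\,dV=0$ for the induced volume form $dV$, a contradiction. The only delicate point is the trace computation $\mathrm{tr}_{L}(\mathrm{Hess}_{g}f)=m-\tfrac12 R(g)$: the Lagrangian structure, together with the K\"ahler symmetry of $\mathrm{Ric}$, is precisely what converts the a priori uncontrolled partial trace $\sum_{i}\mathrm{Ric}(e_{i},e_{i})$ into the globally bounded quantity $\tfrac12 R(g)$; without it the sign of $\Delta_{L}(f\circ F)$ could not be pinned down. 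I note this is consistent with $R(g)\ge 0$ for complete shrinkers as recalled in the introduction, so the assumption amounts to $0\le R(g)<2m$.
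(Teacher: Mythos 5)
Your proposal is correct and follows essentially the same route as the paper: compute $\underline{\Delta}(f\circ F)=\mathrm{tr}^{\top}\mathrm{Hess}_{g}f+g(\nabla f,H)$, use the soliton equation together with the $J$-invariance of $\mathrm{Ric}$ and the Lagrangian condition to get $m-\tfrac12 R(g)$, note the expander term $\lambda|{\nabla f}^{\bot}|^{2}=\tfrac1\lambda|H|^{2}\ge 0$, and contradict the vanishing of the integral of a Laplacian over the closed manifold $L$. No gaps; this matches the paper's argument step for step.
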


The rest of this paper is organized as follows. 
In Section~\ref{Chara}, we give some characterization of self-similar solutions in gradient shrinking Ricci solitons. 
In Section~\ref{aaa}, we give a proof of Theorem \ref{main1} and \ref{main2}. 

\begin{Ack*}
I would like to thank my supervisor, A. Futaki for many useful discussions and constant encouragement. 
\end{Ack*}
%%%%%%%%%%%%%%%%%%%%%%%%%%%%%%%%%%%%%%%%%%%%%%%%%%%%%%%%%%%%%%%%%%%%%%%%%%%%%%%%%%%%%%%%%%%%%%%%%%%%%%%%
%%%%%%%%%%%%%%%%%%%%%%%%%%%%%%%%%%%%%%%%%%%%%%%%%%%%%%%%%%%%%%%%%%%%%%%%%%%%%%%%%%%%%%%%%%%%%%%%%%%%%%%%
%%%%%%%%%%%%%%%%%%%%%%%%%%%%%%%%%%%%%%%%%%%%%%%%%%%%%%%%%%%%%%%%%%%%%%%%%%%%%%%%%%%%%%%%%%%%%%%%%%%%%%%%
%%%%%%%%%%%%%%%%%%%%%%%%%%%%%%%%%%%%%%%%%%%%%%%%%%%%%%%%%%%%%%%%%%%%%%%%%%%%%%%%%%%%%%%%%%%%%%%%%%%%%%%%
%%%%%%%%%%%%%%%%%%%%%%%%%%%%%%%%%%%%%%%%%%%%%%%%%%%%%%%%%%%%%%%%%%%%%%%%%%%%%%%%%%%%%%%%%%%%%%%%%%%%%%%%
%%%%%%%%%%%%%%%%%%%%%%%%%%%%%%%%%%%%%%%%%%%%%%%%%%%%%%%%%%%%%%%%%%%%%%%%%%%%%%%%%%%%%%%%%%%%%%%%%%%%%%%%
%%%%%%%%%%%%%%%%%%%%%%%%%%%%%%%%%%%%%%%%%%%%%%%%%%%%%%%%%%%%%%%%%%%%%%%%%%%%%%%%%%%%%%%%%%%%%%%%%%%%%%%%
%%%%%%%%%%%%%%%%%%%%%%%%%%%%%%%%%%%%%%%%%%%%%%%%%%%%%%%%%%%%%%%%%%%%%%%%%%%%%%%%%%%%%%%%%%%%%%%%%%%%%%%%
%%%%%%%%%%%%%%%%%%%%%%%%%%%%%%%%%%%%%%%%%%%%%%%%%%%%%%%%%%%%%%%%%%%%%%%%%%%%%%%%%%%%%%%%%%%%%%%%%%%%%%%%
%%%%%%%%%%%%%%%%%%%%%%%%%%%%%%%%%%%%%%%%%%%%%%%%%%%%%%%%%%%%%%%%%%%%%%%%%%%%%%%%%%%%%%%%%%%%%%%%%%%%%%%%
%%%%%%%%%%%%%%%%%%%%%%%%%%%%%%%%%%%%%%%%%%%%%%%%%%%%%%%%%%%%%%%%%%%%%%%%%%%%%%%%%%%%%%%%%%%%%%%%%%%%%%%%
\section{Characterization of self-similar solutions}\label{Chara}
In this section, we give some characterization of self-similar solutions in gradient shrinking Ricci solitons and review a result in \cite{Yamamoto2}. 

The first characterization is given by the variation of the weighted volume as follows. 
Let $(N,g,f)$ be a $n$-dimensional gradient shrinking Ricci soliton with potential function $f$ satisfying (\ref{grasol}). 
For an $m$-dimensional compact manifold $L$ and a constant $\lambda\in\mathbb{R}$, we define the weighted volume functional $\mathcal{F}_{\lambda}$ by 
\begin{align*}
\mathcal{F}_{\lambda}(F):=\int_{L}e^{\lambda f}d\mu(F^{*}g)
\end{align*}
for each immersion $F\colon L\to N$, 
where $d\mu(F^{*}g)$ is the induced measure on $L$ with respect to the metric $F^{*}g$. 

\begin{proposition}
Let $F\colon L\to N$ be an immersion and $\lambda\in\mathbb{R}$ be a constant. 
Then the following three conditions are equivalent. 
\begin{enumerate}
\item $F$ is a self-similar solution with $H=\lambda{\nabla f}^{\bot}$. 
\item $F$ is a minimal immersion with respect to a metric $e^{2\lambda f/m}g$ on $N$. 
\item $F$ is a critical point of $\mathcal{F}_{\lambda}$. 
\end{enumerate}
\end{proposition}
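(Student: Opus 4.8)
The plan is to prove the cyclic chain of implications $(1)\Leftrightarrow(2)$ and $(2)\Leftrightarrow(3)$, since the conformal metric $\tilde g:=e^{2\lambda f/m}g$ is the natural bridge between the three conditions. First I would establish $(1)\Leftrightarrow(2)$ by writing down how the mean curvature vector transforms under a conformal change of the ambient metric. Recall the standard fact that if $\tilde g = e^{2\varphi} g$ for a function $\varphi$ on $N$, then the second fundamental forms are related and, tracing, the mean curvature vectors of an $m$-dimensional submanifold satisfy
\[
\tilde H = e^{-2\varphi}\bigl( H - m\,(\nabla\varphi)^{\bot} \bigr),
\]
where $\nabla\varphi$ and $\bot$ are taken with respect to $g$ (the normal bundle is the same for $g$ and $\tilde g$). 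Setting $\varphi = \tfrac{\lambda}{m} f$ gives $\tilde H = e^{-2\lambda f/m}\bigl(H - \lambda(\nabla f)^{\bot}\bigr)$, so $\tilde H \equiv 0$ if and only if $H = \lambda(\nabla f)^{\bot}$. This is exactly the equivalence of $(1)$ and $(2)$.

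Next I would handle $(2)\Leftrightarrow(3)$ by a direct first-variation computation. The key observation is that the induced measures are related by $d\mu(F^*\tilde g) = e^{m\varphi}d\mu(F^*g) = e^{\lambda f}d\mu(F^*g)$, so that $\mathcal F_\lambda(F)=\int_L d\mu(F^*\tilde g) = \mathrm{Vol}_{\tilde g}(F)$ is nothing but the Riemannian volume of $L$ in the metric $\tilde g$. By the classical first variation of volume formula, for a normal variation $V$ of $F$ one has
\[
\frac{d}{dt}\Big|_{t=0}\mathrm{Vol}_{\tilde g}(F_t) = -\int_L \tilde g(\tilde H, V)\, d\mu(F^*\tilde g),
\]
and since tangential variations do not change the volume, $F$ is a critical point of $\mathcal F_\lambda$ if and only if $\tilde H\equiv 0$, i.e. if and only if $F$ is minimal for $\tilde g$. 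This closes the chain. Alternatively, one could compute the first variation of $\mathcal F_\lambda$ directly with respect to $g$: using $\tfrac{d}{dt}d\mu(F_t^*g) = -g(H,V)\,d\mu(F^*g)$ together with $\tfrac{d}{dt}e^{\lambda f} = \lambda e^{\lambda f} g(\nabla f, V)$ (for $V$ normal, $df(V) = g(\nabla f, V) = g((\nabla f)^\bot, V)$) yields
\[
\frac{d}{dt}\Big|_{t=0}\mathcal F_\lambda(F_t) = \int_L e^{\lambda f}\, g\bigl(\lambda(\nabla f)^{\bot} - H,\, V\bigr)\, d\mu(F^*g),
\]
which vanishes for all $V$ precisely when $H = \lambda(\nabla f)^{\bot}$, giving $(1)\Leftrightarrow(3)$ directly as a bonus.

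I expect the main obstacle to be purely bookkeeping: getting the conformal transformation law for the mean curvature vector with the correct sign and factor of $m$, and being careful that $\bot$ and $\nabla$ refer to the right metric at each stage (the normal bundle as a subbundle of $TN$ is unchanged, but the normal projection of a fixed vector is the same for $g$ and $\tilde g$, so this is harmless once noted). There is no deep difficulty here; the proposition is essentially a repackaging of three standard facts — conformal change of $H$, the relation $d\mu_{\tilde g}=e^{\lambda f}d\mu_g$, and the first variation of volume — and the proof is a matter of assembling them cleanly and perhaps citing the analogous computation in the Euclidean self-shrinker literature (where $f=\tfrac12|x|^2$ and $\tilde g$ is the Gaussian-weighted metric) as motivation.
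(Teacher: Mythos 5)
Your proof is correct and follows essentially the same route as the paper: the paper also identifies $\mathcal{F}_{\lambda}(F)$ with the volume of $F$ in the conformal metric $e^{2\lambda f/m}g$ to get $(2)\Leftrightarrow(3)$, and for $(1)\Leftrightarrow(2)$ it simply cites \cite{Yamamoto2}, where the content is exactly the conformal transformation law for the mean curvature vector that you wrote out (with the correct factor $m=\dim L$ and sign). Your direct first-variation computation giving $(1)\Leftrightarrow(3)$ is a correct, self-contained bonus that the paper does not spell out.
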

The equivalence of (1) and (2) is proved in \cite{Yamamoto2}, 
and the equivalence of (2) and (3) can be easily proved by the equality 
\[\int_{L}e^{\lambda f}d\mu(F^{*}g)=\int_{L}d\mu(F^{*}(e^{2\lambda f/m}g)). \]
The equivalence of (1) and (3) can be considered as a generalization of Proposition 3.6 in \cite{ColdingMinicozzi}. 

The second characterization is given by the asymptotic behavior of a Ricci-mean curvature flow, the coupled equation of the Ricci flow and the mean curvature flow. 
Let $(N,\omega,g,J)$ be a compact $2m$-dimensional complete gradient shrinking K\"ahler-Ricci soliton with a potential function $f\colon N\to \mathbb{R}$ satisfying (\ref{grasol}). 
Fix a time $T>0$. Then for $t\in[0,T)$ we define $g_{t}:=(T-t)\Phi_{t}^{*}g$, 
where $\{\Phi_{t}\colon N\to N\}_{t\in(-\infty,T)}$ is the 1-parameter family of holomorphic automorphisms of $(N,J)$ with $\Phi_{0}=\mathrm{id}_{N}$ 
generated by the time dependent vector field $\frac{1}{2(T-t)}\nabla f$. 
Then $g_{t}$ is a solution of K\"ahler-Ricci flow, that is, the associated K\"ahler form $\omega_{t}(\cdot,\cdot):=g_{t}(J\cdot,\cdot)$ satisfies 
\begin{align*}
\frac{\partial}{\partial t}\omega_{t}=-\rho(\omega_{t}), 
\end{align*}
where $\rho(\cdot,\cdot):=\mathrm{Ric}(J\cdot,\cdot)$ is the Ricci form of $\omega_{t}$. 
Here we review the main result in \cite{Yamamoto2}. 
Let $L$ be an $m$-dimensional compact manifold and $F\colon L\times[0,T)\to N$ be a solution of Ricci-mean curvature flow along $g_{t}=(T-t)\Phi_{t}^{*}g$, 
that is, $F$ satisfies 
\begin{align*}
\frac{\partial}{\partial t}F_{t}=H(F_{t}), 
\end{align*}
where $H(F_{t})$ is the mean curvature vector field of $F_{t}(\cdot):=F(\cdot,t)$ calculated by $g_{t}$ at each time $t$. 
Assume that the initial immersion $F_{0}\colon L\to N$ is a Lagrangian immersion for the initial K\"ahler form $\omega$.  
Then, it follows that $F_{t}\colon L\to N$ is also a Lagrangian immersion with respect to $\omega_{t}$ for all $t\in[0,T)$ (c.f. \cite{LotayPacini}). 
That is, the Lagrangian condition is preserved under a Ricci-mean curvature flow along a K\"ahler-Ricci flow. 
We further assume that $F$ develops a singularity of type I, that is, 
the norm of the second fundamental form of $F_{t}$ (denoted by $A(F_{t})$) satisfies 
\[\limsup_{t\to T}\bigl(\sqrt{T-t}\max_{L}|A(F_{t})|\bigr)<\infty. \]

Then, in \cite{Yamamoto2}, it is proved that for any sequence $t_{j}\to T$ and any point $p_{0}\in L$ 
the family of pointed immersions $\tilde{F}_{j}\colon (L,p_{0}) \to N$ defined by $\tilde{F}_{j}:=\Phi_{t_{j}}\circ F_{t_{j}}$
subconverges to a pointed immersion $\tilde{F}_{\infty}\colon (L_{\infty},p_{\infty})\to N$ satisfying 
\[H(\tilde{F}_{\infty})=-\frac{1}{2}{\nabla f}^{\bot}. \]
This result can be considered as a generalization of Huisken's result in \cite{Huisken} for a mean curvature flow in a Euclidean space. 

Since each $\tilde{F}_{j}$ is a Lagrangian immersion in $(N,\omega)$ and the Lagrangian condition ($\tilde{F}_{j}^{*}\omega=0$) is a closed condition, 
it follows that $\tilde{F}_{\infty}\colon L_{\infty}\to N$ is a Lagrangian immersion, that is, the Lagrangian self-shrinker. 
Hence a Lagrangian self-shrinker is an asymptotic model of a Lagrangian mean curvature flow with a type I singularity 
along a K\"ahler-Ricci flow constructed from a gradient shrinking K\"ahler-Ricci soliton.  

\begin{remark}
Actually, the same statement also holds under some additional assumptions even though $N$ is non-compact and complete, see \cite{Yamamoto2} for detail. 
The differences of factor $2$ or $1/2$ between coefficients appeared in some formula in this paper and those in \cite{Yamamoto2} 
arise from the difference of factor $2$ between the Ricci flow equation $\partial_{t}g_{t}=-2\mathrm{Ric}(g_{t})$ and the K\"ahler-Ricci flow equation $\partial_{t}\omega_{t}=-\rho(\omega_{t})$. 
\end{remark}
%%%%%%%%%%%%%%%%%%%%%%%%%%%%%%%%%%%%%%%%%%%%%%%%%%%%%%%%%%%%%%%%%%%%%%%%%%%%%%%%%%%%%%%%%%%%%%%%%%%%%%%%
%%%%%%%%%%%%%%%%%%%%%%%%%%%%%%%%%%%%%%%%%%%%%%%%%%%%%%%%%%%%%%%%%%%%%%%%%%%%%%%%%%%%%%%%%%%%%%%%%%%%%%%%
%%%%%%%%%%%%%%%%%%%%%%%%%%%%%%%%%%%%%%%%%%%%%%%%%%%%%%%%%%%%%%%%%%%%%%%%%%%%%%%%%%%%%%%%%%%%%%%%%%%%%%%%
%%%%%%%%%%%%%%%%%%%%%%%%%%%%%%%%%%%%%%%%%%%%%%%%%%%%%%%%%%%%%%%%%%%%%%%%%%%%%%%%%%%%%%%%%%%%%%%%%%%%%%%%
%%%%%%%%%%%%%%%%%%%%%%%%%%%%%%%%%%%%%%%%%%%%%%%%%%%%%%%%%%%%%%%%%%%%%%%%%%%%%%%%%%%%%%%%%%%%%%%%%%%%%%%%
%%%%%%%%%%%%%%%%%%%%%%%%%%%%%%%%%%%%%%%%%%%%%%%%%%%%%%%%%%%%%%%%%%%%%%%%%%%%%%%%%%%%%%%%%%%%%%%%%%%%%%%%
%%%%%%%%%%%%%%%%%%%%%%%%%%%%%%%%%%%%%%%%%%%%%%%%%%%%%%%%%%%%%%%%%%%%%%%%%%%%%%%%%%%%%%%%%%%%%%%%%%%%%%%%
%%%%%%%%%%%%%%%%%%%%%%%%%%%%%%%%%%%%%%%%%%%%%%%%%%%%%%%%%%%%%%%%%%%%%%%%%%%%%%%%%%%%%%%%%%%%%%%%%%%%%%%%
%%%%%%%%%%%%%%%%%%%%%%%%%%%%%%%%%%%%%%%%%%%%%%%%%%%%%%%%%%%%%%%%%%%%%%%%%%%%%%%%%%%%%%%%%%%%%%%%%%%%%%%%
%%%%%%%%%%%%%%%%%%%%%%%%%%%%%%%%%%%%%%%%%%%%%%%%%%%%%%%%%%%%%%%%%%%%%%%%%%%%%%%%%%%%%%%%%%%%%%%%%%%%%%%%
%%%%%%%%%%%%%%%%%%%%%%%%%%%%%%%%%%%%%%%%%%%%%%%%%%%%%%%%%%%%%%%%%%%%%%%%%%%%%%%%%%%%%%%%%%%%%%%%%%%%%%%%
\section{Proofs of Theorem \ref{main1} and \ref{main2}}\label{aaa}
First, we give a proof of Theorem \ref{main1}. 
The proof is an analog of the proof of Theorem 4.3 of \cite{FutakiLiLi}. 
As the first step, we prove that the weighted Laplacian $\underline{\Delta}_{\phi}$ defined below has an eigenvalue 1. 
In the second step, we use Theorem 1.1 in \cite{FutakiLiLi} giving an estimate for the first eigenvalue of the weighted Laplacian. 

\begin{proof}[Proof of Theorem \ref{main1}]
Put a smooth function on $L$ by $\phi:=\frac{1}{2}f\circ F$ and consider the weighted Laplacian 
\[\mathcal{L}:=\underline{\Delta}_{\phi}:=\underline{\Delta}-\overline{\nabla}\phi\cdot\overline{\nabla}\]
which acts on $C^{\infty}(L)$, where $\underline{\Delta}$ and $\overline{\nabla}$ is the Laplacian and the gradient on $(L,F^{*}g)$. 
Since 
\begin{align}\label{4}
\underline{\Delta}(f\circ F)=\mathrm{tr}^{\top}\mathrm{Hess}_{g}f + g(\nabla f, H), 
\end{align}
we have 
\begin{align*}
\mathcal{L}\phi=&\frac{1}{2}\underline{\Delta}(f\circ F)-\frac{1}{4}(F^{*}g)(\overline{\nabla}(f\circ F),\overline{\nabla}(f\circ F))\\
=&\frac{1}{2}\left(\mathrm{tr}^{\top}\mathrm{Hess}_{g}f + g(\nabla f, H)\right)-\frac{1}{4}|{\nabla f}^{\top}|^2, 
\end{align*}
where $\mathrm{tr}^{\top}$ is the tangential trace, that is, $\mathrm{tr}^{\top}B:=\mathrm{tr}_{F^{*}g}(F^{*}B)$ for a 2-tensor $B$ on $N$. 
Since $H=-\frac{1}{2}{\nabla f}^{\bot}$, we have 
\[\frac{1}{2}g(\nabla f, H)-\frac{1}{4}|{\nabla f}^{\top}|^2=-\frac{1}{4}|{\nabla f}^{\bot}|^2-\frac{1}{4}|{\nabla f}^{\top}|^2=-\frac{1}{4}|\nabla f|^2. \]
Since $(N,g)$ is a gradient shrinking Ricci soliton satisfying (\ref{grasol}), we have 
\begin{align}\label{5}
\mathrm{tr}^{\top}\mathrm{Hess}_{g}f=\mathrm{tr}^{\top}(g-\mathrm{Ric}(g))=m-\mathrm{tr}^{\top}\mathrm{Ric}(g). 
\end{align}
Furthermore, since $F:L\to N$ is a Lagrangian immersion and $(N,g,J)$ is a K\"ahler manifold, we have 
\begin{align}\label{6}
\begin{aligned}
\mathrm{tr}^{\top}\mathrm{Ric}(g)=&\sum_{i=1}^{m}\mathrm{Ric}(g)(F_{*}e_{i},F_{*}e_{i})\\
=&\frac{1}{2}\sum_{i=1}^{m}\mathrm{Ric}(g)(F_{*}e_{i},F_{*}e_{i})+\frac{1}{2}\sum_{i=1}^{m}\mathrm{Ric}(g)(JF_{*}e_{i},JF_{*}e_{i})=\frac{1}{2}R(g), 
\end{aligned}
\end{align}
for an orthonormal basis $e_{1},\dots,e_{m}$ on $(L,F^{*}g)$. Hence we have 
\begin{align*}
\mathcal{L}\phi=\frac{m}{2}-\frac{1}{4}(R(g)+|\nabla f|^2)
=\frac{m}{2}-\frac{1}{4}(C_{0}+2f)
=\left( \frac{m}{2}-\frac{C_{0}}{4} \right) -\phi. 
\end{align*}
Since $\mathcal{L}(\mathrm{const})=0$, we have 
\[\mathcal{L}\left(\left( \frac{m}{2}-\frac{C_{0}}{4} \right) -\phi \right)=-\left(\left( \frac{m}{2}-\frac{C_{0}}{4} \right) -\phi \right). \]
By the assumption, $\frac{m}{2}-\frac{C_{0}}{4}-\phi\neq 0$. 
Hence we have proved that 1 is an eigenvalue of the weighted Laplacian $\mathcal{L}=\Delta_{\phi}$. 

Next, we prove that 
\begin{align}\label{keybd}
\mathrm{Ric}(F^{*}g)+\mathrm{Hess}_{F^{*}g}\phi  \geq \kappa F^{*}g, 
\end{align}
for a Riemannian manifold $(L,F^{*}g)$, where 
\[\kappa:=\frac{1}{2}-m(K_{0}+A_{0}^2). \]
Let $X$ be a tangent vector on $L$ and $e_{1},\dots,e_{m}$ be an orthonormal basis on $(L,F^{*}g)$. 
Then, by the Gauss equation, we have 
\begin{align*}
\mathrm{Ric}(F^{*}g)(X,X)=&\sum_{i=1}^{m}\mathrm{Rm}(F^{*}g)(X,e_{i},X,e_{i})\\
=&\sum_{i=1}^{m}\mathrm{Rm}(g)(F_{*}X,F_{*}e_{i},F_{*}X,F_{*}e_{i})\\
&-\sum_{i=1}^{m}|A(X,e_{i})|^2+g(A(X,X),H). 
\end{align*}
Furthermore, we have 
\begin{align*}
\mathrm{Hess}_{F^{*}g}\phi(X,X)=&\frac{1}{2}\mathrm{Hess}_{F^{*}g}(f\circ F)(X,X)\\
=&\frac{1}{2}\biggl(\mathrm{Hess}_{g}f(F_{*}X,F_{*}X)+g(A(X,X),\nabla f)\biggr)\\
=&\frac{1}{2}|X|^2-\frac{1}{2}\mathrm{Ric}(g)(F_{*}X,F_{*}X)-g(A(X,X),H), 
\end{align*}
where we used $\mathrm{Hess}_{g}f=g-\mathrm{Ric}$ and ${\nabla f}^{\bot}=-2H$. 
Since 
\begin{align*}
&\sum_{i=1}^{m}\mathrm{Rm}(g)(F_{*}X,F_{*}e_{i},F_{*}X,F_{*}e_{i})-\frac{1}{2}\mathrm{Ric}(g)(F_{*}X,F_{*}X)\\
=&\frac{1}{2}\sum_{i=1}^{m}\mathrm{Rm}(g)(F_{*}X,F_{*}e_{i},F_{*}X,F_{*}e_{i})-\frac{1}{2}\sum_{i=1}^{m}\mathrm{Rm}(g)(F_{*}X,JF_{*}e_{i},F_{*}X,JF_{*}e_{i})\\
\geq& -\frac{1}{2}mK_{0}|X|^2-\frac{1}{2}mK_{0}|X|^2=-mK_{0}|X|^2
\end{align*}
and 
\[-\sum_{i=1}^{m}|A(X,e_{i})|^2\geq -mA_{0}|X|^2, \]
we have 
\[\mathrm{Ric}(F^{*}g)(X,X)+\mathrm{Hess}_{F^{*}g}\phi(X,X)  \geq \left(\frac{1}{2}-m(K_{0}+A_{0}^2)\right)|X|^2. \]
Hence the inequality (\ref{keybd}) holds. 

Thus, by Theorem 1.1 in \cite{FutakiLiLi}, we have 
\begin{align}\label{FLL}
1\geq \sup_{s\in(0,1)}\left\{ 4s(1-s)\frac{\pi^2}{d^2} + s\kappa \right\}, 
\end{align}
where $d:=\mathrm{diam}(L,F_{*}g)$. 
Choosing $s=\frac{1}{2}$ in (\ref{FLL}), we have 
\begin{align*}
d\geq \frac{\pi}{\sqrt{1-\frac{1}{2}\kappa}}=\frac{\pi}{\sqrt{\frac{3}{4}+\frac{m}{2}(K_{0}+A_{0}^2)}}. 
\end{align*}
Hence the proof is completed. 
\end{proof}

Next, we prove Theorem \ref{main2}. 
The proof is an analog of the proof of Proposition 5.3 of \cite{CaoLi}. 
\begin{proof}[Proof of Theorem \ref{main2}]
Assume that there exists a compact Lagrangian self-expander $F\colon L^{m} \to N^{2m}$ ({\it compact} means $M$ is compact) with $H=\lambda{\nabla f}^{\bot}$ for some positive constant $\lambda > 0$. 
Then, using computations (\ref{4}), (\ref{5}) and (\ref{6}) in the proof of Theorem \ref{main1} and the equality $H=\lambda{\nabla f}^{\bot}$, we have 
\begin{align*}
\underline{\Delta}(f\circ F)&=\mathrm{tr}^{\top}\mathrm{Hess}_{g}f + g(\nabla f, H)\\
&=m-\frac{1}{2}R(g)+\frac{1}{\lambda}|H|^2, 
\end{align*}
where $\underline{\Delta}$ is the Laplacian on $(L,F^{*}g)$. Hence, by the divergence theorem, we have 
\begin{align*}
0=\int_{L}\underline{\Delta}(f\circ F)d\mu(F^{*}g)=\int_{L}\left( m-\frac{1}{2}R(g)+\frac{1}{\lambda}|H|^2  \right)d\mu(F^{*}g). 
\end{align*}
However, the right hand side is strictly positive by the assumption $R(g) <  2m$. 
This leads to a contradiction. 
\end{proof}
%%%%%%%%%%%%%%%%%%%%%%%%%%%%%%%%%%%%%%%%%%%%%%%%%%%%%%%%%%%%%%%%%%%%%%%%%%%%%%%%%%%%%%%%%%%%%%%%%%%%%%%%
%%%%%%%%%%%%%%%%%%%%%%%%%%%%%%%%%%%%%%%%%%%%%%%%%%%%%%%%%%%%%%%%%%%%%%%%%%%%%%%%%%%%%%%%%%%%%%%%%%%%%%%%
%%%%%%%%%%%%%%%%%%%%%%%%%%%%%%%%%%%%%%%%%%%%%%%%%%%%%%%%%%%%%%%%%%%%%%%%%%%%%%%%%%%%%%%%%%%%%%%%%%%%%%%%
%%%%%%%%%%%%%%%%%%%%%%%%%%%%%%%%%%%%%%%%%%%%%%%%%%%%%%%%%%%%%%%%%%%%%%%%%%%%%%%%%%%%%%%%%%%%%%%%%%%%%%%%
%%%%%%%%%%%%%%%%%%%%%%%%%%%%%%%%%%%%%%%%%%%%%%%%%%%%%%%%%%%%%%%%%%%%%%%%%%%%%%%%%%%%%%%%%%%%%%%%%%%%%%%%
%%%%%%%%%%%%%%%%%%%%%%%%%%%%%%%%%%%%%%%%%%%%%%%%%%%%%%%%%%%%%%%%%%%%%%%%%%%%%%%%%%%%%%%%%%%%%%%%%%%%%%%%
%%%%%%%%%%%%%%%%%%%%%%%%%%%%%%%%%%%%%%%%%%%%%%%%%%%%%%%%%%%%%%%%%%%%%%%%%%%%%%%%%%%%%%%%%%%%%%%%%%%%%%%%
%%%%%%%%%%%%%%%%%%%%%%%%%%%%%%%%%%%%%%%%%%%%%%%%%%%%%%%%%%%%%%%%%%%%%%%%%%%%%%%%%%%%%%%%%%%%%%%%%%%%%%%%
%%%%%%%%%%%%%%%%%%%%%%%%%%%%%%%%%%%%%%%%%%%%%%%%%%%%%%%%%%%%%%%%%%%%%%%%%%%%%%%%%%%%%%%%%%%%%%%%%%%%%%%%
%%%%%%%%%%%%%%%%%%%%%%%%%%%%%%%%%%%%%%%%%%%%%%%%%%%%%%%%%%%%%%%%%%%%%%%%%%%%%%%%%%%%%%%%%%%%%%%%%%%%%%%%
%%%%%%%%%%%%%%%%%%%%%%%%%%%%%%%%%%%%%%%%%%%%%%%%%%%%%%%%%%%%%%%%%%%%%%%%%%%%%%%%%%%%%%%%%%%%%%%%%%%%%%%%

%%%%%%%%%%%%%%%%%%%%%%%%%%%%%%%%%%%%%%%%%%%%%%%%%%%%%%%%%%%%
%%%%%%%%%%%%%%%%%%%%%%%%%%%%%%%%%%%%%%%%%%%%%%%%%%%%%%%%%%%%
%%%%%%%%%%%%%%%%%%%%%%%%%%%%%%%%%%%%%%%%%%%%%%%%%%%%%%%%%%%%
%%%%%%%%%%%%%%%%%%%%%%%%%%%%%%%%%%%%%%%%%%%%%%%%%%%%%%%%%%%%
%%%%%%%%%%%%%%%%%%%%%%%%%%%%%%%%%%%%%%%%%%%%%%%%%%%%%%%%%%%%
\end{document}